\def\blue{\textcolor{blue}}
\def\red{\textcolor{red}}
\newtheorem{thm}{Theorem}
\newtheorem{cor}[thm]{Corollary}
\newtheorem{prop}[thm]{Proposition}
\newtheorem{conj}[thm]{Conjecture}
\theoremstyle{definition}
\newtheorem{defn}[thm]{Definition}
\newtheorem*{rmk}{Remark}
\numberwithin{equation}{section}
\def\Z{\mathbb{Z}}
\def\Q{\mathbb{Q}}
\def\N{\mathbb{N}}
\title[A curious interpolation of  Carlitz-Riordan's  $q$-ballot numbers]{A curious polynomial interpolation of \\ Carlitz-Riordan's  $q$-ballot numbers}
 \author{Fr\'ed\'eric Chapoton}
 \author{Jiang Zeng}
 \address{Universit\'{e} de Lyon,  Universit\'{e} Lyon 1,  Institute Camille Jordan,
UMR 5028 du CNRS,  69622 Villeurbanne, France}
\email{chapoton@math.univ-lyon1.fr, zeng@math.univ-lyon1.fr}
\date{\today}							
\begin{document}
\maketitle
\begin{abstract}
  We study a polynomial sequence $C_n(x|q)$ defined as a solution of a
  $q$-difference equation. This sequence, evaluated at $q$-integers,
  interpolates Carlitz-Riordan's $q$-ballot numbers. In the basis given by
  some kind of $q$-binomial coefficients, the coefficients are again
  some $q$-ballot numbers. We obtain in a combinatorial way another
  curious recurrence relation for these polynomials. 
\end{abstract}

\section{Introduction}

This paper was motivated by  a previous work
of the first author on flows on rooted trees \cite{Cha13b}, where the well-known
Catalan numbers and the closely related ballot numbers played an
important role. In fact, one can easily introduce one more parameter
$q$ in this work, and then Catalan numbers and ballots numbers get
replaced by their $q$-analogues introduced a long time ago by Carlitz-Riordan~\cite{CR64}, see also \cite{Car72,FH85}.

These $q$-Catalan numbers have been recently considered by many people,
see for example \cite{Cig05, BF07, Hag08, BP12}, including some work by Reineke \cite{Rein05} on
moduli space of quiver representations.

Inspired by an analogy with another work of the first author on rooted
trees \cite{Cha13a}, it is natural to try to interpolate the $q$-ballot
numbers. In the present article, we prove that this  is possible and
study the interpolating polynomials.

Our main object of study is a sequence of polynomials in $x$ with
coefficients in $\Q(q)$, defined by the  $q$-difference
equation:
\begin{align}\label{eq1}
  \Delta_{q}C_{n+1}(x|q)=qC_n(q^2x+q+1|q),
\end{align}
where $\Delta_qf(x)={(f(1+qx)-f(x))}/{(1+(q-1)x)}$ is the Hahn operator.

After reading a previous version of this paper, Johann Cigler has kindly  brought the two related references \cite{Cig97, Cig98} to our attention, where 
a sequence of more general polynomials $G_n(x,r)$ was introduced  through 
a $q$-difference operator for $q$-integer $x$ and positive integer $r$. Comparing these two sequences  one has 
$$G_n(qx+1,2)=C_{n+1}(x|q).$$ 

In  the  next  section  we  recall classical  material  on  Carlitz-Riordan's
$q$-analogue  for   Catalan  and   ballot  numbers  and   define  our
polynomials.  In the third  section, we  evaluate our  polynomials at
$q$-integers in  terms of $q$-ballot numbers and  prove a product formula when
 $q=1$. In the fourth section, we find their expansion  in a
basis made of  a kind of $q$-binomial coefficients  and obtain another
recurrence for these polynomials. This recurrence  is 
 not usual   even in the special $x=q=1$ case and  we have only a combinatorial proof in the general case.
We conclude the paper with some open problems.

\textsc{Nota Bene:} Figures are best viewed in color.

\section{Carlitz-Riordan's $q$-ballot numbers}
Recall that the Catalan numbers $C_n=\frac{1}{n+1}{2n\choose n}$ may be defined as solutions to  
\begin{align}\label{def-cat}
C_{n+1}=\sum^{n}_{k=0}C_kC_{n-k}, \quad (n\geq 0),\quad C_0=1.
\end{align} 
 The first values are
$$
\begin{array}{c|cccccccccc}
n&0&1&2&3&4&5&6&7&8\\
\hline
C_n&1&1&2&5&14&42&132&429&1430
\end{array}
$$
It is well known that $C_n$ is the number of lattice paths 
from $(0,0)$ to $(n,n)$  with steps $(1,0)$ and $(0,1)$, which do not pass above
the line $y=x$.   As a natural generalization, one considers
the set ${\mathcal P}(n,k)$ of    lattice paths  from $(0,0)$ to $(n+1,k)$ 
with steps $(1,0)$ and $(0,1)$,  such that  the last step  is $(1,0)$
and  they  never rise above the line $y=x$. 
Let $f(n,k)$ be the cardinality of 
 ${\mathcal P}(n,k)$.  The first values of $f(n,k)$ are given in Table~\ref{table1}. These numbers are called  
 ballot numbers and have a long history in the literature of combinatorial theory. 
 Moreover,  one (see \cite{Com74}) has the explicit formula
\begin{align}\label{eq:ballot}
f(n,k)=\frac{n-k+1}{n+1}{n+k\choose k}\qquad (n\geq k\geq 0).
\end{align}

\begin{table}
$$
\begin{array}{c|ccccccc}
$n\textbackslash k$&0&1&2&3&4&5&6\\
\hline
0&{\bf1}&&&&&&\\
1&1&{\bf1}&&&&&\\
2&{\bf1}&2&{\bf 2}&&&&\\
3&1&{\bf 3}&5&{\bf 5}&&&\\
4&{\bf1}&4&{\bf 9}&14&{\bf 14}&&\\
5&1&{\bf 5}&14&{\bf 28}&42&{\bf 42}&\\
6&{\bf1}&6&{\bf 20}&48&{\bf 90}&132&{\bf 132}\\
\end{array}
$$
\caption{\label{table1} The first values of ballot numbers $f(n,k)$}
\end{table}

Carlitz and Riordan~\cite{CR64} introduced the following  $q$-analogue of these numbers
\begin{align}\label{def:carlitz}
 f(n,k|q)=\sum_{\gamma\in {\mathcal P}(n,k)}q^{A(\gamma)},
\end{align}
 where $A(\gamma)$ is the area under the path (and above the $x$-axis).
The first values of $f(n,k|q)$ are given in Table~\ref{table2}.
\begin{table}
$$
\begin{array}{c|ccccc}
$n\textbackslash k$&0&1&2&3&4\\
\hline
0&1&&&&\\
1&1&q&&&\\
2&1&q+q^2&q^2+q^3&&\\
3&1&q+q^2+q^3&q^2+q^3+2q^4+q^5&q^3+q^4+2q^5+q^6\\
4&1&q+q^2+q^3+q^4&q^2+q^3+2q^4+2q^5+2q^6+q^7&q^3Y&q^4Y
\end{array}
$$
\caption{\label{table2} The first values of $q$-ballot numbers $f(n,k|q)$ with $Y=q^6+3q^5+3q^4+3q^3+2q^2+q+1$}
\end{table}
Furthermore, Carlitz \cite{Car72} uses a variety of elegant techniques to derive several basic properties of the $f(n,k|q)$, among 
which the following is the basic recurrence relation
\begin{align}\label{eq:fq}
f(n,k|q)=qf(n,k-1|q)+q^{k}f(n-1,k|q)\qquad (n, k\geq 0),
\end{align}
where $f(n,k|q)=0$ if $n<k$ and $f(0,0|q)=1$.

It is also easy to see that   the polynomial $f(n,k|q)$ is of degree $kn-k(k-1)/2$ and  satisfies the equation
$f(n,n|q)=qf(n,n-1|q)$.
If one defines the $q$-Catalan numbers by 
\begin{align}
C_{n+1}(q)= \sum_{k=0}^nf(n,k|q)=q^{-n-1}f(n+1,n+1|q)\qquad (n\geq 0),
\end{align}
then, one obtains the following analogue of \eqref{def-cat} for the Catalan numbers
\begin{align}\label{eq:carlitz}
C_{n+1}(q)=\sum_{i=0}^n  C_{i}(q)C_{n-i}(q)q^{(i+1)(n-i)},
\end{align}
where $C_0(q)=1$. Setting  ${\widetilde C}_n(q)=q^{n\choose 2} C_{n}(q^{-1})$, one has a simpler $q$-analog of
\eqref{def-cat} 
\begin{align}
 {\widetilde C}_{n+1}(q)=\sum_{i=0}^n q^{i}{\widetilde C}_{i}(q) {\widetilde C}_{n-i}(q).
\end{align}

The first values are $C_1(q)=1$, $C_2(q)=1+q$, $C_3(q)=1+q+2q^2+q^3$ and 
$$
C_4(q)=1+q+2q^2+3q^3+3q^4+3q^5+q^6.
$$
No explicit formula is known for  Carlitz-Riordan's $q$-Catalan numbers. However,
Andrews~\cite{And75} proved the following recurrence formula
\begin{align}
C_n(q)=\frac{q^n}{[n+1]_q}{2n\brack n}_q+q\sum_{j=0}^{n-1}(1-q^{n-j}) q^{(n+1-j)j}{2j+1\brack j}_q C_{n-1-j}(q),
\end{align}
where $[x]_{q}=\frac{q^{x}-1}{q-1}$.

Recall that the $q$-shifted factorial $(x;q)_n$ is defined by
$$
(x;q)_n=(1-x)(1-xq)\cdots (1-xq^{n-1})\quad  (n\geq 1)\quad \text{and}\quad  (x;q)_0=1.
$$
The two  kinds of $q$-binomial coefficients are defined  by
\begin{align*}
{n\brack k}_q:=\frac{(q;q)_n}{(q;q)_k(q;q)_{n-k}},\qquad
{x\choose k}_q:=\frac{x(x-1)\ldots (x-[k-1]_q)}{[k]_q!},
\end{align*}
with ${x\choose 0}_q=1$.
Note that 
$$
{[n]_q\choose k}_q=q^{k\choose 2} {n\brack k}_q,\qquad 
{[-n]_{q}\choose k}_q=(-1)^{k}q^{-kn}{k+n-1\brack k}_{q}.
$$

The $q$-derivative operator ${\mathcal D}_q$ and Hahn operator $\Delta_q$ are  defined by
\begin{align}\label{Hahn}
{\mathcal D}_qf(x)=\frac{f(qx)-f(x)}{(q-1)x}\quad\text{and} \quad \Delta_qf(x)=\frac{f(1+qx)-f(x)}{1+(q-1)x}.
\end{align}

\begin{defn}
The  sequence of  polynomials $\{C_n(x|q)\}_{n\geq 1}$ is defined 
by the $q$-difference equation~\eqref{eq1}  or equivalently
\begin{align}\label{eq:def1}
\frac{C_{n+1}(x|q)-C_{n+1}(q^{-1}x-q^{-1}|q)}{1+(q-1)x}=C_n(qx+1|q)\qquad (n\geq 1),
\end{align}
with the initial condition $C_1(x|q)=1$ and  $C_{n}(-\frac{1}{q}|q)=0$ for $n\geq 2$.
\end{defn}
For example, we have 
\begin{align*}
C_2(x|q)&=1+q{x\choose 1}_q,\\
C_{3}(x|q)&=(1+q)+(q+q^2+q^3){x\choose 1}_q+q^{4}{x\choose 2}_q,\\
C_{4}(x|q)&=(q^3+q^2+2q+1)+(q^6+q^5+2q^4+2q^3+2q^2+q){x\choose 1}_q\\
&\hspace{2 cm} +(q^9+q^8+q^7+q^6+q^5)q^{-1}{x\choose 2}_q+q^{9} {x\choose 3}_q.
\end{align*}
It is clear that $C_{n}(x|q)$ is a polynomial in $\Q(q)[x]$ of degree $n-1$ for $n\geq 1$. 


\section{Some preliminary results}
We first show that the evaluation of the polynomials $C_{n}(x|q)$ at $q$-integers is always a polynomial 
in $\N[q]$.
Note that  formulae \eqref{eq:case1} and  \eqref{eq:explicit}  were  implicitly given in \cite{Cig97}.

\begin{prop} \label{prop1} When $x=[k]_{q}$ we have 
\begin{align}\label{eq:case1}
C_{n+1}([k]_{q}|q)=q^{kn+\frac{n(n+1)}{2}}f(k+n, n|q^{-1})\qquad (n,\, k\geq 0).
\end{align}
\end{prop}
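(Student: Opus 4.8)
The plan is to show that both sides of \eqref{eq:case1}, viewed as arrays indexed by $n,k\geq 0$, satisfy one and the same recurrence together with the same initial data, and then invoke uniqueness. The whole argument rests on three elementary identities for $q$-integers, namely $1+(q-1)[k]_q=q^k$, $\;q[k]_q+1=[k+1]_q$, and $q^{-1}[k]_q-q^{-1}=[k-1]_q$, together with the boundary value $[-1]_q=-1/q$. These say precisely that substituting $x=[k]_q$ turns the Hahn operator in \eqref{eq:def1} into a clean additive-multiplicative recurrence on the integer lattice.

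First I would evaluate the defining equation \eqref{eq:def1} at $x=[k]_q$. The denominator $1+(q-1)[k]_q$ becomes $q^k$, the argument $qx+1$ becomes $[k+1]_q$, and $q^{-1}x-q^{-1}$ becomes $[k-1]_q$. Writing $a(n,k):=C_{n+1}([k]_q|q)$, this yields
\begin{align*}
a(n,k)-a(n,k-1)=q^k\,a(n-1,k+1)\qquad (n\geq 1,\ k\geq 0).
\end{align*}
For $k=0$ one has $[k-1]_q=[-1]_q=-1/q$, so the term $a(n,-1)=C_{n+1}(-1/q|q)$ vanishes for $n\geq 1$ by the boundary condition in the definition. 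Thus $a(n,-1)=0$, and together with $a(0,k)=C_1([k]_q|q)=1$ this recurrence determines every $a(n,k)$ by induction on $n$ (and, within each level $n$, by increasing $k$).

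Next I would check that $b(n,k):=q^{kn+n(n+1)/2}f(k+n,n|q^{-1})$ obeys the same data. The initial condition is immediate since $f(k,0|q^{-1})=1$, and $b(n,-1)=0$ because the out-of-range ballot number $f(n-1,n|q^{-1})$ is zero. The recurrence $b(n,k)-b(n,k-1)=q^k b(n-1,k+1)$ reduces, after cancelling the common prefactor $q^{kn+n(n+1)/2}$ and collecting the remaining exponents, to
\begin{align*}
f(k+n,n|q^{-1})-q^{-n}f(k+n-1,n|q^{-1})=q^{-1}f(k+n,n-1|q^{-1}),
\end{align*}
which is exactly Carlitz's recurrence \eqref{eq:fq} with $q$ replaced by $q^{-1}$ at $(N,K)=(k+n,n)$. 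At $k=0$ this specializes to $f(n,n|q^{-1})=q^{-1}f(n,n-1|q^{-1})$, that is, the boundary identity $f(n,n|q)=qf(n,n-1|q)$ read at $q^{-1}$, so the diagonal case $N=K$ causes no trouble.

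The main obstacle is nothing more than careful bookkeeping of the powers of $q$: one must verify that the prefactor $q^{kn+n(n+1)/2}$ of $b(n,k)$, the prefactor of $b(n,k-1)$, and the factor $q^k$ times the prefactor of $b(n-1,k+1)$ line up so that, after division, precisely the exponents $-n$ and $-1$ occurring in \eqref{eq:fq} at $q^{-1}$ are produced. Granting this, both arrays satisfy the same first-order recurrence in $k$ with identical seed $a(n,-1)=b(n,-1)=0$ and identical initial row $a(0,k)=b(0,k)=1$; hence $a\equiv b$, which is \eqref{eq:case1}.
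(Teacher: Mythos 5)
Your proposal is correct and follows essentially the same route as the paper: substitute $x=[k]_q$ into \eqref{eq:def1} to obtain $C_{n+1}([k]_q|q)=q^kC_n([k+1]_q|q)+C_{n+1}([k-1]_q|q)$, and then observe that the claimed formula transforms this into Carlitz's recurrence \eqref{eq:fq} at $q^{-1}$. You merely spell out the exponent bookkeeping and the boundary cases ($a(n,-1)=0$ via $C_{n+1}(-1/q|q)=0$, $a(0,k)=1$) that the paper leaves as ``easy to see.''
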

\begin{proof} When $x=[k]_{q}$ Eq.~\eqref{eq:def1} becomes
\begin{align}
C_{n+1}([k]_{q}|q)=q^k C_{n}([k+1]_{q}|q)+C_{n+1}([k-1]_{q}|q).
\end{align}
It is easy to see that  \eqref{eq:case1} is equivalent to  \eqref{eq:fq}.
\end{proof}

\begin{cor}
We have 
\begin{align}
C_{n+1}(0|q)=C_{n}(1|q)\quad\text{and} \quad C_{n+1}(1|q)={\widetilde C}_{n+1}(q).
\end{align}
\end{cor}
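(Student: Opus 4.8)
The plan is to specialize Proposition~\ref{prop1} at the two smallest $q$-integers, $0=[0]_q$ and $1=[1]_q$, and then reconcile the resulting powers of $q$ by means of the boundary identity $f(n,n|q)=qf(n,n-1|q)$ recorded above. Since both claims are exact identities between polynomials, the only real content is the careful bookkeeping of $q$-exponents coming out of \eqref{eq:case1}.

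For the first identity, I would set $k=0$ in \eqref{eq:case1} to obtain $C_{n+1}(0|q)=q^{n(n+1)/2}f(n,n|q^{-1})$, and apply \eqref{eq:case1} with $n$ replaced by $n-1$ and $k=1$ to obtain $C_{n}(1|q)=q^{(n-1)+n(n-1)/2}f(n,n-1|q^{-1})$. A direct comparison shows the two prefactors differ by exactly one power of $q$, so that the asserted equality $C_{n+1}(0|q)=C_{n}(1|q)$ reduces to $qf(n,n|q^{-1})=f(n,n-1|q^{-1})$. This last relation is precisely $f(n,n|q)=qf(n,n-1|q)$ evaluated at $q^{-1}$, which closes this case.

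For the second identity, I would set $k=1$ in \eqref{eq:case1}, giving $C_{n+1}(1|q)=q^{n+n(n+1)/2}f(n+1,n|q^{-1})$. Combining $f(n+1,n+1|q)=qf(n+1,n|q)$ with the definition $C_{n+1}(q)=q^{-n-1}f(n+1,n+1|q)$ yields $f(n+1,n|q)=q^{n}C_{n+1}(q)$, and hence $f(n+1,n|q^{-1})=q^{-n}C_{n+1}(q^{-1})$. Substituting this back collapses the prefactor to $q^{n(n+1)/2}=q^{\binom{n+1}{2}}$, so that $C_{n+1}(1|q)=q^{\binom{n+1}{2}}C_{n+1}(q^{-1})$; recognizing the right-hand side as $\widetilde C_{n+1}(q)$ completes the argument.

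I do not expect a genuine obstacle in this corollary: it follows entirely from the specialization in Proposition~\ref{prop1} together with the cited identities for $f(n,k|q)$ and the definitions of $C_{n+1}(q)$ and $\widetilde C_{n+1}(q)$. The one point demanding attention is the exponent arithmetic, in particular verifying that in the first case the two prefactors differ by exactly the single factor of $q$ that is absorbed by $f(n,n|q)=qf(n,n-1|q)$.
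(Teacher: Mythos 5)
Your proposal is correct, and your treatment of the second identity coincides with the paper's: both set $k=1$ in \eqref{eq:case1}, convert $f(n+1,n|q^{-1})$ to $f(n+1,n+1|q^{-1})$ via the boundary relation $f(n,n|q)=qf(n,n-1|q)$, and then invoke the definitions of $C_{n+1}(q)$ and $\widetilde C_{n+1}(q)$; your exponent bookkeeping checks out. Where you diverge is the first identity. The paper obtains $C_{n+1}(0|q)=C_n(1|q)$ in one line by putting $x=0$ directly into the defining $q$-difference equation \eqref{eq:def1}: the denominator $1+(q-1)x$ becomes $1$ and the term $C_{n+1}(q^{-1}x-q^{-1}|q)$ becomes $C_{n+1}(-\tfrac1q|q)=0$ by the initial condition, so the identity drops out with no reference to Proposition~\ref{prop1} at all. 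You instead evaluate \eqref{eq:case1} at $k=0$ and at $(n-1,k=1)$ and reconcile the prefactors $q^{n(n+1)/2}$ and $q^{\,n-1+n(n-1)/2}$ (which indeed differ by a single factor of $q$, absorbed by $f(n,n|q^{-1})=q^{-1}f(n,n-1|q^{-1})$). Your route is valid and has the minor virtue of keeping both halves of the corollary inside the same framework of specialization at $q$-integers, but it is a detour: the paper's argument is shorter, needs no exponent arithmetic, and makes visible the role of the normalizing condition $C_n(-\tfrac1q|q)=0$, which your version never touches.
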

\begin{proof}
Letting $x=0$ in \eqref{eq:def1} we get $C_{n+1}(0|q)=C_{n}(1|q)$. 
Letting $k=1$ in \eqref{eq:case1} we have 
\begin{align*}
C_{n+1}(1|q)&=q^{n+\frac{n(n+1)}{2}}f(1+n, n|q^{-1})\\
&=q^{n+1+\frac{n(n+1)}{2}} f(n+1, n+1|q^{-1})\\
&=q^{\frac{n(n+1)}{2}} C_{n+1}(q^{-1}),
\end{align*}
which is equal to  ${\widetilde C}_{n+1}(q)$ by definition.
\end{proof}
The shifted factorial is defined by
$$
(x)_0=1\quad \text{and} \quad (x)_n=x(x+1)\cdots (x+n-1), \quad n=1, 2, 3,\ldots,
$$
and $(x)_{-n}=1/(x-n)_n$.

\begin{prop}\label{prop2}
When $q=1$ we have the explicit formula
\begin{align}\label{eq:explicit}
C_{n+1}(x|1)=\frac{(x+1) (x+n+2)_{n-1}}{n!}=\frac{x+1}{x+1+n}{x+2n\choose n} \quad (n\geq 0).
\end{align}
\end{prop}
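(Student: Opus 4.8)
The plan is to argue by induction on $n$, exploiting the fact that at $q=1$ the $q$-difference equation \eqref{eq1} degenerates to an ordinary forward-difference equation whose polynomial solutions are rigid. First I would specialize the data of the definition at $q=1$: since $\Delta_q f(x)\to f(x+1)-f(x)$ and $q^2x+q+1$ becomes $x+2$, equation \eqref{eq1} reads
\begin{align*}
C_{n+1}(x+1|1)-C_{n+1}(x|1)=C_n(x+2|1)\qquad(n\geq 1),
\end{align*}
together with $C_1(x|1)=1$ and the boundary value $C_{n+1}(-1|1)=0$ for $n\geq 1$. Writing $P_n(x)$ for the claimed right-hand side $\frac{(x+1)(x+n+2)_{n-1}}{n!}$, it suffices to show that the $P_n$ satisfy the same recurrence, the same initial value, and the same boundary value; this closes the induction because a forward-difference equation determines its polynomial solution up to an additive constant (a polynomial $d$ with $d(x+1)=d(x)$ is constant), and that constant is pinned down by the single value at $x=-1$.

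The base case and the boundary condition are immediate. Using $(x+2)_{-1}=1/(x+1)$ one gets $P_0(x)=1=C_1(x|1)$, and the leading factor $(x+1)$ forces $P_n(-1)=0$ for every $n\geq 1$. The heart of the matter is therefore the self-contained polynomial identity
\begin{align*}
P_n(x+1)-P_n(x)=P_{n-1}(x+2),
\end{align*}
which, once in hand, combines with the induction hypothesis $C_n(\cdot|1)=P_{n-1}(\cdot)$ to reproduce the recurrence above.

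To prove this identity I would work in the product form and factor out the common block $A=(x+n+3)_{n-2}=(x+n+3)(x+n+4)\cdots(x+2n)$, obtaining
\begin{align*}
P_n(x+1)-P_n(x)=\frac{A}{n!}\bigl[(x+2)(x+2n+1)-(x+1)(x+n+2)\bigr].
\end{align*}
The bracket collapses to $n(x+3)$, so the difference equals $\frac{(x+3)A}{(n-1)!}=P_{n-1}(x+2)$, as required. The equivalence of the two displayed forms of the answer, $\frac{(x+1)(x+n+2)_{n-1}}{n!}=\frac{x+1}{x+1+n}\binom{x+2n}{n}$, is then a one-line check from $\binom{x+2n}{n}=(x+n+1)_n/n!$ after cancelling the factor $x+n+1$.

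I do not anticipate a genuine obstacle: the only conceptual ingredient is the uniqueness of polynomial solutions of $g(x+1)-g(x)=h(x)$, and the rest is bookkeeping of Pochhammer shifts in the key identity. The mild point to watch is the degenerate treatment of small $n$—the empty and negative-length products hidden in $(x+n+2)_{n-1}$ and in the block $A$—which is why I would state the boundary condition only for $n\geq 1$, handle $n=0$ as the separate base case, and read all Pochhammer symbols with the conventions fixed in the text.
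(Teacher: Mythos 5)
Your proof is correct. The paper's proof also starts from the $q=1$ specialization of the defining difference equation, but it then routes through Proposition~\ref{prop1} (which identifies $C_{n+1}(k|1)$ with the ballot number $f(k+n,n)$ for nonnegative integers $k$) and the classical closed form \eqref{eq:ballot}, reducing everything to the binomial identity
\[
\frac{k+1}{k+1+n}\binom{k+2n}{n}=\frac{k}{k+n}\binom{k-1+2n}{n}+\frac{k+2}{k+1+n}\binom{k+2n-1}{n-1},
\]
checked at integer points and then promoted to a polynomial statement by degree considerations. That identity is exactly your $P_n(x)-P_n(x-1)=P_{n-1}(x+1)$ (your version is shifted by one in $x$), so the algebraic core --- the Pochhammer cancellation producing the factor $n(x+3)$ --- is the same computation that the paper dismisses as ``straightforward.'' What your route buys is self-containedness and completeness: you never need Proposition~\ref{prop1} or the known ballot-number formula, and you make explicit the uniqueness step (a polynomial with $d(x+1)=d(x)$ is constant, and the constant is killed by $C_{n+1}(-1|1)=0$) that the paper leaves implicit when it asserts that verifying the recurrence at positive integers ``suffices.'' The paper's version buys brevity and keeps the conceptual link to the ballot numbers in view. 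Your treatment of the degenerate case $P_0(x)=1$ via $(x+2)_{-1}=1/(x+1)$ is consistent with the paper's stated convention $(x)_{-n}=1/(x-n)_n$, so no gap there either.
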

\begin{proof} 
When $q=1$ the equation~\eqref{eq:def1} reduces to
\begin{align}\label{c:q=1}
C_{n+1}(x|1)=C_{n+1}(x-1|1)+C_{n}(x+1|1).
\end{align}
Since $C_{n+1}(x|1)$ is a polynomial in $x$ of degree $n$, it suffices to prove that 
the right-hand side of \eqref{eq:explicit} satisfy \eqref{c:q=1} for 
$x$ being positive integers $k$. 
By Proposition~\ref{prop1} and \eqref{eq:ballot} it suffices  to 
check the  following identity 
\begin{align}
\frac{k+1}{k+1+n}{k+2n\choose n}=\frac{k}{k+n}{k-1+2n\choose n}+\frac{k+2}{k+1+n}{k+2n-1\choose n-1}.
\end{align}
This is straightforward.
\end{proof}


To motivate our result in the next section we first prove two  $q$-versions of a folklore result on 
the  polynomials which take integral values on integers (see \cite[p. 38]{St86}).
Introduce the polynomials $p_{k}(x)$ by
$$
p_0(x)=1\quad \text{and}\quad p_k(x)=(-1)^kq^{-{k\choose 2}}\frac{(x-1)(x-q)\cdots (x-q^{k-1})}{(q;q)_k},\quad 
k\geq 1.
$$
So $p_k(q^n)={n\brack k}_q$ for  $n\in \N$.

\begin{prop}\label{prop3} The following statements hold true.
\begin{enumerate}
\item[(i)] The polynomial $f(x)$ of degree $k$ assumes values in $\Z[q]$ at $x=1, q,\ldots, q^k$ if and only if 
\begin{align}\label{eq:Newton-basis1}
f(x)=c_0+c_1p_1(x)+\cdots +c_kp_k(x),
\end{align}
where $c_j=q^{{j\choose 2}}(1-q)^j {\mathcal D}^j_qf(1)$ are polynomials in $\Z[q]$  
 for $0\leq j\leq k$.
\item[(ii)] The polynomial ${\tilde f}(x)$ of degree $k$ assumes values in $\Z[q]$ at $x=0, [1]_q,\ldots, [k]_q$ if and only if 
\begin{align}\label{eq:Newton-basis2}
{\tilde f}(x)=\sum_{j=0}^k{\tilde c}_j q^{-{j\choose 2}}{x\choose j}_q,
\end{align}
where  ${\tilde c}_j=q^{{j\choose 2}}\Delta_q^j{\tilde f}(0)$ are polynomials in $\Z[q]$
for $0\leq j\leq k$.
\end{enumerate}
\end{prop}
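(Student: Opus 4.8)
The plan is to read both parts as $q$-analogues of the classical Newton interpolation theorem (a degree-$k$ polynomial is integer-valued on $0,1,\dots,k$ iff its finite differences at $0$ are integers), and to exploit the fact that the two statements are not independent. The single structural observation underlying everything is that each family $\{p_0,\dots,p_k\}$ (resp. $\{q^{-\binom{j}{2}}{x\choose j}_q\}_{0\le j\le k}$) is a $\Q(q)$-basis of the polynomials of degree $\le k$, because the $j$-th member has degree exactly $j$. Hence every $f$ of degree $k$ has a unique expansion with coefficients in $\Q(q)$, and the whole problem splits into (a) identifying these coefficients through the relevant $q$-operator, and (b) proving that ``all coefficients lie in $\Z[q]$'' is equivalent to ``$f$ is $\Z[q]$-valued at the sample points''. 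I would moreover note at the outset that the affine substitution $(\sigma g)(x)=g(1+(q-1)x)$ sends the sample set $\{[n]_q\}$ to $\{q^n\}$, carries $p_j$ to $q^{-\binom{j}{2}}{x\choose j}_q$ (they agree at all $x=[n]_q$), and conjugates the operators through $\Delta_q\,\sigma=(q-1)\,\sigma\,\mathcal{D}_q$. Consequently it suffices to prove part (ii) carefully and transport it to (i) by $\sigma$; the stray factor $q-1$ in this conjugacy is precisely what produces the power $(q-1)^j$ in (i) against its absence in (ii), and it forces $c_j=\tilde c_j$.

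The technical heart is a pair of lowering relations. A direct computation gives $\Delta_q{x\choose j}_q={x\choose j-1}_q$, hence $\Delta_q\bigl(q^{-\binom{j}{2}}{x\choose j}_q\bigr)=q^{\,1-j}\,q^{-\binom{j-1}{2}}{x\choose j-1}_q$; similarly, writing $p_k$ via the product $\prod_{i=0}^{k-1}(x-q^i)$ and simplifying $\mathcal{D}_q$ of that product, one finds $\mathcal{D}_q p_k=\frac{q^{1-k}}{q-1}\,p_{k-1}$. Iterating either relation $m$ times lowers the basis index by $m$ and accumulates an explicit power of $q$ (and of $q-1$ in case (i)); evaluating the result at the base point — $x=0$ for (ii), $x=1$ for (i) — annihilates every remaining basis element except the constant one, since ${x\choose \ell}_q$ vanishes at $x=0$ and $p_\ell$ carries the factor $(x-1)$ for $\ell\ge 1$. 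This isolates the diagonal term and collapses the accumulated exponent of $q$ to $\binom{j}{2}$, yielding $\tilde c_j=q^{\binom{j}{2}}\Delta_q^{\,j}\tilde f(0)$ and, after transport by $\sigma$, the companion formula $c_j=q^{\binom{j}{2}}(q-1)^{\,j}\mathcal{D}_q^{\,j}f(1)$.

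For the integrality equivalence I would use the evaluation identities $p_j(q^n)={n\brack j}_q$ and ${[n]_q\choose j}_q=q^{\binom{j}{2}}{n\brack j}_q$, so that in both parts the $j$-th basis element evaluates to ${n\brack j}_q$ at the $n$-th sample point. The forward direction is immediate: if all coefficients lie in $\Z[q]$ then, since each ${n\brack j}_q\in\N[q]$, every sampled value is a $\Z[q]$-combination of these and hence lies in $\Z[q]$. For the converse I would assemble the values at the sample points into the linear system whose matrix is $\bigl({n\brack j}_q\bigr)_{0\le j\le n\le k}$: this matrix is lower-triangular with diagonal entries ${n\brack n}_q=1$ and all entries in $\Z[q]$, hence invertible over $\Z[q]$ with inverse again over $\Z[q]$ (equivalently, $q$-binomial inversion writes each coefficient explicitly as a $\Z[q]$-combination of the sampled values). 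Therefore integrality of the values forces integrality of the coefficients, which closes the equivalence.

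The main obstacle, and essentially the only place demanding care, is the bookkeeping of the two lowering relations: verifying $\Delta_q{x\choose j}_q={x\choose j-1}_q$ and $\mathcal{D}_q p_k=\frac{q^{1-k}}{q-1}p_{k-1}$ with the correct scalars, and then tracking the accumulated powers of $q$ and of $q-1$ through the $m$-fold iteration so that the exponent of $q$ reduces to $\binom{j}{2}$ after evaluation at the base point. The basis and inversion steps are purely formal and I expect no trouble there; the sign and the power of $q-1$ in part (i) are the delicate point, and the conjugacy $\Delta_q\sigma=(q-1)\sigma\mathcal{D}_q$ is a convenient consistency check, as it predicts $c_j=\tilde c_j$ and thereby pins down the constant in the coefficient formula for (i).
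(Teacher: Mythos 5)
Your proposal is correct and follows essentially the same route as the paper: expansion in the triangular bases, the lowering relations $\mathcal{D}_q p_k=\frac{q^{1-k}}{q-1}\,p_{k-1}$ and $\Delta_q{x\choose k}_q={x\choose k-1}_q$ to isolate the coefficients at the base point, and the evaluations $p_j(q^n)={n\brack j}_q$ for the integrality equivalence (the paper proves both parts directly and relegates your transport $\sigma$ to a remark, and it leaves the converse integrality direction implicit where you make it explicit via unitriangularity of $({n\brack j}_q)$ over $\Z[q]$). One point in your favor: your bookkeeping gives $c_j=q^{\binom{j}{2}}(q-1)^j\mathcal{D}_q^jf(1)$, which is the correct constant --- the factor $(1-q)^j$ as printed in the statement and proof is off by $(-1)^j$, as the test $f=p_1$ already shows.
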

\begin{proof}
Clearly we can expand any polynomial $f(x)$ of degree $k$ in the basis $\{p_j(x)\}_{0\leq j\leq k}$ as in 
\eqref{eq:Newton-basis1}. 
Besides, it is easy to see that
\begin{align}\label{q-derivative}
{\mathcal D}_qp_k(x)=\frac{q^{1-k}}{1-q}p_{k-1}(x)\Longrightarrow 
{\mathcal D}^j_qp_k(x)=\frac{q^{{j+1\choose 2}-jk}}{(1-q)^j} p_{k-j}(x).
\end{align}
Hence, applying ${\mathcal D}^j_q$ to the two sides of \eqref{eq:Newton-basis1} we obtain
$$
{\mathcal D}^j_qf(1)=c_j \frac{q^{-{j\choose 2}}}{(1-q)j}\Longrightarrow 
c_j=q^{{j\choose 2}}(1-q)^j {\mathcal D}^j_qf(1).
$$
Since ${\mathcal D}^j_qf(1)$ involves only the values of $f(x)$ at $x=0,[1],\ldots, [k]_q$ for $0\leq j\leq k$,
the result follows.  In the same manner, since
$$
\Delta_q{x\choose k}_q={x\choose k-1}_q,
$$
we obtain the expansion  \eqref{eq:Newton-basis2}.
\end{proof}
\begin{rmk} 
\begin{enumerate}
\item We can also derive \eqref{eq:Newton-basis2}  from \eqref{eq:Newton-basis1}
as follows.
 Let $y=\frac{x-1}{q-1}$. For any polynomial $f(x)$ define
$\tilde f(y)=f(1+(q-1)y)$.
Since  $q^n=1+(q-1)[n]_q$, it is clear that $f(q^n)\in \Z[q]\Longleftrightarrow \tilde f([n]_q)\in \Z[q]$.
Writing  $1+qx-[j]_q=q(x-[j-1]_q)$ we see that 
$$
\tilde f_j(x)=q^{-{j\choose 2}} {x\choose j}_q.
$$
The expansion \eqref{eq:Newton-basis2}  follows from \eqref{eq:Newton-basis1} immediately.
\item
When  $\tilde f(x)=x^n$, it is known (see, for example,  \cite{Ze06}) that
$$
\tilde c_k=\Delta_q0^n=[k]_q! S_q(n,k),
$$
where $[k]_q!=[1]_q\cdots [k]_q$ and $S_q(n,k)$ are classical $q$-Stirling numbers of the second kind defined 
by
$$
S_q(n,k)=S_q(n-1,k-1)+[k]_qS_q(n-1,k)\qquad \text{for}\quad n\geq k\geq 1,
$$
with $S_q(n,0)=S_q(0,k)=0$ except $S_q(0,0)=1$.
\item The two formulas \eqref{eq:Newton-basis1} and \eqref{eq:Newton-basis2} are special cases of 
 the Newton interpolation formula, namely,  for any polynomial $f$ of degree less than or equal to $n$ one has
\begin{align}\label{newtoninterpol}
f(x)=\sum_{k=0}^n \Biggl( \sum_{j=0}^k\frac{f(b_j)}
{\prod_{r=0, r\neq j}^{k} (b_j-b_r)} \Biggr) (x-b_0)\cdots (x-b_{k-1}),
\end{align}
where $b_0, b_1, \ldots, b_{n-1}$ are  distinct complex numbers.  Some recent applications of \eqref{newtoninterpol}
in the computation of moments of Askey-Wilson polynomials  are given in \cite{GITZ}.
\end{enumerate}
\end{rmk}
\section{Main results}
In the light of Propositions~\ref{prop1} and \ref{prop3}, it is natural to consider the expansion of
$C_{n+1}(x|q)$ and $C_{n+1}(qx+1|q)$ in the basis ${x\choose j}_q$ ($j\geq 0$). It turns out that 
the coefficients in such  expansions are Carlitz-Riordan's $q$-ballot numbers.  
Note that formula \eqref{eq:main2} 
was  implicitly given in \cite{Cig97}.

\begin{thm}\label{thm1} For $n\geq 0$ we have 
\begin{align}\label{eq:main1}
C_{n+1}(x|q)&=\sum_{j=0}^{n}f(n+j,n-j|q^{-1}) q^{jn+\frac{1}{2}(n-j)(n+j+1)}
{x\choose j}_q,\\
C_{n}(qx+1|q)&=\sum_{j=0}^{n-1}f(n+j,n-1-j|q^{-1})
q^{jn+\frac{1}{2}n(n+1)-\frac{1}{2}(j+1)(j+2)}{x\choose j}_q.\label{eq:main2}
\end{align}
\end{thm}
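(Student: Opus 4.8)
The plan is to work throughout in the basis $\{{x\choose j}_q\}_{j\ge0}$. Since $\Delta_q{x\choose j}_q={x\choose j-1}_q$ (the identity used already in Proposition~\ref{prop3}), the expansion of any polynomial in this basis is unique, and the coefficient of ${x\choose j}_q$ in a polynomial $g$ is the constant term of $\Delta_q^{j}g$, namely $\Delta_q^{j}g(0)$. The engine of the whole argument is a clean description of the affine substitution $x\mapsto qx+1$ in this basis. Starting from the identity $1+qx-[i]_q=q(x-[i-1]_q)$ noted in the Remark, a short computation gives the key lemma
$$
{qx+1\choose j}_q=q^{j}{x\choose j}_q+q^{j-1}{x\choose j-1}_q ,
$$
so that if $f(x)=\sum_{j}a_j{x\choose j}_q$ then $f(qx+1)=\sum_{j}q^{j}(a_j+a_{j+1}){x\choose j}_q$. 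This single lemma converts both halves of the theorem into statements about the coefficient sequences.

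Writing $\alpha_{n,j}$ and $\beta_{n,j}$ for the proposed coefficients on the right-hand sides of \eqref{eq:main1} and \eqref{eq:main2}, I would prove both formulas by induction on $n$, the base case being \eqref{eq:main1} for $n=0$, which is immediate from $C_1(x|q)=1$. The inductive step makes two passes through the key lemma. First, assuming \eqref{eq:main1} for $C_{n+1}(x|q)=\sum_j\alpha_{n,j}{x\choose j}_q$, the lemma gives at once that the coefficient of ${x\choose j}_q$ in $C_{n+1}(qx+1|q)$ is $q^{j}(\alpha_{n,j}+\alpha_{n,j+1})$; comparing with \eqref{eq:main2} at index $n+1$ reduces to checking $\beta_{n+1,j}=q^{j}(\alpha_{n,j}+\alpha_{n,j+1})$. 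Second, I rewrite the defining equation \eqref{eq1} as $\Delta_qC_{n+2}(x|q)=q\,C_{n+1}(q^2x+q+1|q)$ and observe that $q^2x+q+1$ is exactly $q(qx+1)+1$; applying the lemma to the expansion of $C_{n+1}(qx+1|q)$ just obtained, the coefficient of ${x\choose j}_q$ in $\Delta_qC_{n+2}$ becomes $q^{j+1}(\beta_{n+1,j}+\beta_{n+1,j+1})$. Since $\Delta_q$ lowers the index in the basis, this same coefficient equals $\alpha_{n+1,j+1}$, so the step reduces to checking $\alpha_{n+1,j+1}=q^{j+1}(\beta_{n+1,j}+\beta_{n+1,j+1})$; the one remaining constant term $\alpha_{n+1,0}=C_{n+2}(0|q)$ is supplied directly by Proposition~\ref{prop1} at $k=0$.

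It remains to verify the two scalar identities $\beta_{n+1,j}=q^{j}(\alpha_{n,j}+\alpha_{n,j+1})$ and $\alpha_{n+1,j+1}=q^{j+1}(\beta_{n+1,j}+\beta_{n+1,j+1})$, and this is where the real content lies. In each case the three ballot numbers that appear are precisely those linked by Carlitz's recurrence \eqref{eq:fq} written for the argument $q^{-1}$, namely $f(N,K|q^{-1})=q^{-1}f(N,K-1|q^{-1})+q^{-K}f(N-1,K|q^{-1})$; hence the identities hold at the level of ballot numbers, and what must be confirmed is only that the powers of $q$ carried by $\alpha$ and $\beta$ reproduce the weights $q^{-1}$ and $q^{-K}$ dictated by \eqref{eq:fq}. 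I expect this $q$-exponent bookkeeping — expanding the quadratic exponents $\tfrac12(n-j)(n+j+1)$ and $\tfrac12n(n+1)-\tfrac12(j+1)(j+2)$ and matching them against the target shifts $P-1$ and $P-K$ — to be the only delicate point; it is routine but must be carried out for both passes. The conceptual crux is the substitution lemma, which turns the $q$-difference equation \eqref{eq1} into a transparent two-term recurrence on the coefficients and thereby lets the induction close.
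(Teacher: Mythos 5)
Your proposal is correct, but it proves the theorem by a genuinely different route than the paper. The paper reduces both identities to statements at the $q$-integers $x=[k]_q$ via Proposition~\ref{prop1}, which turns them into convolution identities for the $q$-ballot numbers (equations \eqref{eq:key1}--\eqref{eq:key2}), and then proves those bijectively by cutting each lattice path at the point where it crosses the anti-diagonal $y=-x+2n$ and accounting for the three pieces of area. You instead stay entirely in the basis $\{{x\choose j}_q\}$, prove the substitution lemma ${qx+1\choose j}_q=q^{j}{x\choose j}_q+q^{j-1}{x\choose j-1}_q$ (which is correct: the factors $qx+1-[i]_q=q(x-[i-1]_q)$ for $i\geq 1$ together with $qx+1=q(x-[j-1]_q)+[j]_q$ give it directly), and run an induction in which the defining equation \eqref{eq1} and the lemma convert everything into the two coefficient identities $\beta_{n+1,j}=q^{j}(\alpha_{n,j}+\alpha_{n,j+1})$ and $\alpha_{n+1,j+1}=q^{j+1}(\beta_{n+1,j}+\beta_{n+1,j+1})$, with the constant term supplied by Proposition~\ref{prop1} at $k=0$. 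You deferred the exponent bookkeeping, but it does close: in the first identity the exponent gaps come out to exactly $1$ and $n-j$, and in the second to $1$ and $n-j$ as well, matching the weights $q^{-1}$ and $q^{-K}$ in Carlitz's recurrence \eqref{eq:fq} written at $q^{-1}$ with $(N,K)=(n+1+j,\,n-j)$ and $(n+j+2,\,n-j)$ respectively; the boundary terms ($\alpha_{n,n+1}=\beta_{n+1,n+1}=0$, $f(\cdot,-1|q)=0$) are also consistent. What each approach buys: the paper's argument yields the stronger standalone convolution identities \eqref{eq:key1}--\eqref{eq:key2} and a transparent combinatorial reason for the coefficients, while yours is more mechanical and elementary, needing only the recurrence \eqref{eq:fq} and the $q$-difference equation, and it makes visible why the two expansions \eqref{eq:main1} and \eqref{eq:main2} are forced to alternate with one another under $x\mapsto qx+1$ and $\Delta_q$.
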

\begin{proof}
It is sufficient to prove the theorem for $x=[k]_q$ with $k=0, 1, \ldots, n$.
By Proposition~\ref{prop1}, the two equations \eqref{eq:main1} and  \eqref{eq:main2}   are  equivalent to 
\begin{align}
f(k+n, n|q^{-1})&=\sum_{j=0}^{k}f(n+j,n-j|q^{-1}) q^{jn-kn-j}
{k\brack j}_q,\\
f(k+n, n-1|q^{-1})&=\sum_{j=0}^{k}f(n+j,n-j-1|q^{-1}) q^{jn-kn-2j+k}
{k\brack j}_q,
\end{align}
for $n\geq k\geq j$. Replacing $q$ by $1/q$ and using ${k\brack j}_{q^{-1}}=q^{-j(k-j)}{k\brack j}_q$ we get  
\begin{align}
f(n+k, n|q)&=\sum_{j=0}^{k}f(n+j,n-j|q) q^{(n-j)(k-j)+j}{k\brack j}_q,\label{eq:key1}\\
f(k+n, n-1|q)&=\sum_{j=0}^{k}f(n+j,n-j-1|q) q^{(n-j-1)(k-j)+j}
{k\brack j}_q.\label{eq:key2}
\end{align}
We only prove \eqref{eq:key1}.
By definition, the left-hand side $f(n+k, n|q)$ is the enumerative polynomial of lattice paths from $(0,0)$ to $(n+k+1, n)$
with $(1,0)$ as the last step.
Each such path $\gamma$ must cross the line $y=-x+2n$. Suppose it crosses this line at 
the point $(n+j, n-j)$,  
$0\leq j\leq  k$. Then the path corresponds to a unique pair $(\gamma_1, \gamma_2)$, where
$\gamma_1$ is a path from $(0,0)$ to $(n+j, n-j)$ and $\gamma_2$ is a path from $(n+j,n-j)$ to $(n+k, n)$.
It is clear that  the area under the path $\gamma$ is equal to $S_1+S_2+S_3+j$, where
\begin{itemize}
\item $S_1$ is the area under the path $\gamma_1'$, which is obtained from $\gamma_1$ plus the last step 
$(n+j, n-j)\rightarrow (n+j+1,n-j)$;  
\item $S_2$ is the area under the path $\gamma_2$ and above the line $y=n-j$;
\item $S_3$ is the area of the rectangle delimited by the four  lines $y=0$, $y=n-j$, $x=n+j+1$ and $x=n+k+1$, i.e., $(n-j)(k-j)$.
\end{itemize}
This decomposition is depicted  in Figure~\ref{fig1}. Clearly, summing over all such lattice paths gives the summand on the right-hand side 
of \eqref{eq:key1}. This completes the proof.

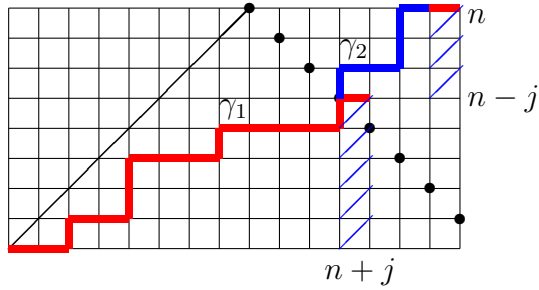
\begin{figure}[t]
\unitlength=10mm
\begin{picture}(6, 4)(0,0)
\multiput(.2,.2)(.4,0){16}
{\line(0,1){3.2}}
\multiput(.2,.2)(0, .4){9}
{\line(1,0){6}}
\put(.2,.2){\line(1,1){3.2}}
\put(3.4,3.4){\circle*{.15}}
\put(3.8,3){\circle*{.15}}
\put(4.2,2.6){\circle*{.15}}
\put(4.6,2.2){\circle*{.15}}
\put(5,1.8){\circle*{.15}}
\put(5.4,1.4){\circle*{.15}}
\put(5.8,1.0){\circle*{.15}}
\put(6.2,0.6){\circle*{.15}}
\put(.2,.2){\line(1,1){3.2}}
\put(6.3,3.2){$n$}
\put(6.3,2.1){$n-j$}
\put(4.4,-.2){$n+j$}
\linethickness{1mm}
\put(.2,.2){\red{\line(1,0){.8}}}
\put(1,.2){\red{\line(0,1){.4}}}
\put(1,.6){\red{\line(1,0){.8}}}
\put(1.8,.6){\red{\line(0,1){.8}}}
\put(1.8,1.4){\red{\line(1,0){1.2}}}
\put(3.0,2){$\gamma_1$}
\put(3.0,1.4){\red{\line(0,1){.4}}}
\put(3.0,1.8){\red{\line(1,0){1.6}}}
\put(4.6,1.8){\red{\line(0,1){.4}}}
\put(4.6,2.2){\red{\line(1,0){.4}}}
\put(4.6,2.2){\blue{\line(0,1){.4}}}
\put(4.6,2.6){\blue{\line(1,0){.8}}}
\put(5.4,2.6){\blue{\line(0,1){.8}}}
\put(5.4,3.4){\blue{\line(1,0){0.4}}}
\put(5.8,3.4){\red{\line(1,0){0.4}}}
\linethickness{2mm}
\put(4.6,.2){\blue{\line(1,1){.44}}}
\put(4.6,.6){\blue{\line(1,1){.44}}}
\put(4.6,1){\blue{\line(1,1){.44}}}
\put(4.6,2.8){$\gamma_2$}
\put(4.6,1.4){\blue{\line(1,1){.44}}}
\put(4.6,1.8){\blue{\line(1,1){.44}}}
\put(5.8,2.2){\blue{\line(1,1){.44}}}
\put(5.8,2.6){\blue{\line(1,1){.44}}}
\put(5.8,3){\blue{\line(1,1){.44}}}
\end{picture}
\caption{The decomposition $\gamma\mapsto (\gamma_1, \gamma_2)$}
\label{fig1}
\end{figure}
\end{proof}
\begin{rmk}
When $q=1$, by \eqref{eq:ballot},  the above theorem implies that
\begin{align}\label{eq:cq=1}
\frac{x+1}{x+n+1}{x+2n\choose n}&=\sum_{j=0}^{n}\frac{2j+1}{n+j+1}{2n\choose n-j}{x\choose j},\\
\frac{x+2}{x+n+1}{x+2n-1\choose n-1}&=\sum_{j=0}^{n-1}\frac{2j+2}{n+j+1}{2n-1\choose n-j-1}{x\choose j}.
\label{eq:dq=1}
\end{align}


Note that  the two  sequences 
$$\{f(n+j,n-j|1)\}\quad 
\text{and} \quad \{f(n+j,n-j-1|1)\}\qquad (0\leq j\leq n)
$$
 correspond, respectively, to  
the $(2n-1)$-th and $2n$-th anti-diagonal coefficients 
of the triangle $\{f(n,k)\}_{0\leq k\leq n}$, see Table~\ref{table1}.

%
\end{rmk}
\medskip
\begin{thm}\label{thm2} The polynomials $C_n(x|q)$ satisfy  
$C_1(x|q)=1$ and 
\begin{align}\label{eq:rec}
[n]_qC_{n+1}(x|q)=([2n-1]_q&+xq^{2n-1})C_n(x|q)\\
&+\sum^{n-2}_{j=0}[n-j-1]_q{\widetilde C}_j(q)C_{n-j}(x|q)q^{2j+1}. \nonumber
\end{align}
\end{thm}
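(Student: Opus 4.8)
The plan is to reduce the polynomial identity \eqref{eq:rec} to a family of identities among Carlitz--Riordan $q$-ballot numbers, and then to establish the latter by a lattice-path decomposition in the spirit of the proof of Theorem~\ref{thm1}. The first step is a degree count. The left-hand side $[n]_qC_{n+1}(x|q)$ is a polynomial in $x$ of degree $n$; on the right, $([2n-1]_q+xq^{2n-1})C_n(x|q)$ has degree $1+(n-1)=n$, while each $C_{n-j}(x|q)$ has degree $n-j-1\le n-1$. Hence both sides are polynomials in $x$ of degree at most $n$, and it suffices to verify \eqref{eq:rec} at the $n+1$ distinct points $x=[k]_q$, $k=0,1,\dots,n$. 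At such a point the linear coefficient collapses, since
\begin{align*}
[2n-1]_q+[k]_qq^{2n-1}=[k+2n-1]_q.
\end{align*}

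Next I would substitute $x=[k]_q$ and invoke Proposition~\ref{prop1} to replace every evaluation $C_m([k]_q|q)$ by a $q$-ballot number, together with $\widetilde{C}_j(q)=q^{\binom{j+1}{2}}f(j,j|q^{-1})$, which follows from the definition of $\widetilde{C}_j$ and the stated relation $C_m(q)=q^{-m}f(m,m|q)$. After collecting the powers of $q$ and replacing $q$ by $q^{-1}$ (so as to pass from $f(\cdot|q^{-1})$ to $f(\cdot|q)$, exactly as in the passage to \eqref{eq:key1}), the recurrence \eqref{eq:rec} becomes the purely combinatorial identity
\begin{align*}
[n]_q f(k+n,n|q) &= q\,[k+2n-1]_q\, f(k+n-1,n-1|q)\\
&\quad+ \sum_{j=0}^{n-2} q^{(n+k)(j+1)-j(j+2)}[n-1-j]_q\, f(j,j|q)\, f(k+n-j-1,n-j-1|q),
\end{align*}
to be proved for all $n\ge k\ge 0$. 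As a consistency check, at $q=1$ and $k=1$ this collapses, via \eqref{eq:ballot}, to the unusual Catalan recurrence $nC_{n+1}=2nC_n+\sum_{j=0}^{n-2}(n-1-j)C_jC_{n-j}$, which confirms that the reduction is faithful.

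Finally I would prove this $q$-ballot identity by a decomposition analogous to, but more elaborate than, the one used for Theorem~\ref{thm1}. The factor $f(k+n,n|q)$ enumerates by area the paths $P$ from $(0,0)$ to $(k+n+1,n)$ with horizontal last step lying weakly below $y=x$, and the prefactor $[n]_q=\sum_{r=0}^{n-1}q^r$ records a marked level. I would cut each marked path at its last contact $(m,m)$ with the diagonal $y=x$: the portion up to $(m,m)$ is a Dyck-type path whose area enumerator is the $q$-Catalan factor $f(j,j|q)$, while the portion after $(m,m)$, once translated to the origin, never meets the diagonal again and is enumerated by $f(k+n-j-1,n-j-1|q)$, with the index $j$ determined by $m$. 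The rectangle swept out by the cut contributes the explicit power $q^{(n+k)(j+1)-j(j+2)}$, just as the area $S_3=(n-j)(k-j)$ appeared in Figure~\ref{fig1}; the case $m=n$ produces the first term on the right and the cases $m<n$ produce the summands. I expect the main obstacle to be precisely this area bookkeeping: verifying that the increments created by the cut assemble into the quadratic exponent $q^{(n+k)(j+1)-j(j+2)}$, and that the marked level combines with the run-lengths adjacent to the cut to yield the $q$-integer weights $q[k+2n-1]_q$ and $[n-1-j]_q$. Everything preceding this decomposition is the routine degree-and-evaluation reduction.
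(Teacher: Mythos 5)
Your reduction is correct and coincides with the first half of the paper's proof: both sides are polynomials in $x$ of degree $n$, so it suffices to check $x=[k]_q$, where $[2n-1]_q+[k]_qq^{2n-1}=[k+2n-1]_q$; applying Proposition~\ref{prop1} and $\widetilde C_j(q)=q^{\binom{j+1}{2}}f(j,j|q^{-1})$ and inverting $q$ gives exactly your identity, with the exponent $(n+k)(j+1)-j(j+2)$ worked out correctly (the paper phrases the same identity via the complementary-area statistic $\tilde f(m,n|q)=q^{(m-n)n+\binom{n+1}{2}}f(m,n|q^{-1})$, which is only a cosmetic difference). The gap is entirely in the combinatorial step, which is the actual content of the theorem, and the decomposition you sketch does not work as stated. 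You propose to classify a marked path by the position $(m,m)$ of its \emph{last} contact with the diagonal, sending $m=n$ to the term $q[k+2n-1]_qf(k+n-1,n-1|q)$ and $m<n$ to the summands. Already at $q=1$, $n=2$, $k=1$ this fails: of the five paths counted by $f(3,2)=5$, exactly two have last diagonal contact at $(2,2)$, so your first class contains $2\cdot[2]_1=4$ marked paths, whereas the first term equals $[4]_1f(2,1)=8$; the remaining class has $6$ elements against a summand equal to $2$. The structural reason is that $q[k+2n-1]_q$ depends on $k$, i.e.\ on the full length $k+2n-1$ of a path, so it cannot arise from the marked level (which takes only $n$ values) combined with ``run-lengths adjacent to the cut''; no partition of the marked paths by a statistic of the path alone can produce it.

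What is actually needed, and what the paper does, is a case split governed by the \emph{interaction} of the mark with the path: let $j$ be minimal with $(j+1,j)\to(j+1,j+1)$ a step of $\gamma$ (the first return to $y=x$). If the mark has height at least $j+2$, cut at $(j+1,j+1)$: the initial piece is a primitive Dyck path (touching $y=x$ only at its endpoints), accounting for $\tilde f(j,j|q)$ up to an explicit power of $q$, and the mark survives into the second piece with $n-j-1$ possible heights, which is where $[n-j-1]_q$ comes from. If the mark has height at most $j+1$ (or there is no return), one instead deletes the vertical step of $\gamma$ at the mark's height together with the first horizontal step; this is a bijection onto pairs $(i,\gamma')$ with $\gamma'\in\mathcal P(k+n-1,n-1)$ and $i\in\{1,\dots,k+2n-1\}$, and it is this re-encoding of the mark as a position along the whole path that produces $[k+2n-1]_q$. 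Neither ingredient is present in your sketch, so the proof is incomplete at its central step.
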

\begin{proof} Since $C_{n+1}(x|q)$ is a polynomial in $x$ of degree $n$,  it suffices to
prove \eqref{eq:rec} for $x=[k]_q$, where $k$ is any positive integer, namely,
\begin{align}\label{x=k}
[n]_qC_{n+1}([k]_{q}|q)&=[2n+k-1]_qC_n([k]_{q}|q)\\
&+\sum^{n-2}_{j=0}[n-j-1]_q{\widetilde C}_j(q)C_{n-j}([k]_{q}|q)q^{2j+1}.\nonumber
\end{align}
 Let $m\geq n$ and
\begin{align}\label{eq:ftilde}
{\tilde f}(m,n|q)=q^{(m-n)n+{n+1\choose 2}}f(m,n|q^{-1}).
\end{align}
In view of the  definition \eqref{def:carlitz} it is clear that  
$$
{\tilde f}(m,n|q)=\sum_{\gamma\in {\mathcal P}(m,n)}q^{A'(\gamma)},
$$
where $A'(\gamma)$ denotes the area 
above the path $\gamma$ and under the line $y=x$ and $y=n$.
Since
$$
{\widetilde C}_j(q)=q^{j\choose 2}C_j(q^{-1})=q^{j+1\choose 2}f(j,j|q^{-1})=\tilde f(j,j|q),
$$
 using  Proposition~\ref{prop1} and \eqref{eq:ftilde} with $m=k+n$, we can rewrite  \eqref{x=k} as
\begin{align}\label{eq:key}
[n]_{q}&{\tilde f}(m, n|q)
=[n+m-1]_{q}{\tilde f}(m-1, n-1|q)\\
&+\sum^{n-2}_{j=0}q^{j}[n-j-1]_{q}{\tilde f}(j,j|q) q^{j+1}[n-j-1]_{q}{\tilde f}(m-j-1, n-j-1|q).\nonumber
\end{align}
A pointed lattice path is a pair $(\alpha, \gamma)$ such 
that $\alpha\in \{(1,1),\ldots, (n,n)\}$ and $\gamma\in {\mathcal P}(m,n)$.
If $\alpha=(i,i)$ we  call $i$ the height of $\alpha$ and write $h(\alpha)=i$.
Let ${\mathcal P}^*(m,n)$ be the set of all such pointed lattice paths.  
It is clear that the left-hand side of \eqref{eq:key} has the following interpretation
\begin{align}\label{eq:left}
[n]_{q}{\tilde f}(m, n|q)=\sum_{(\alpha,\gamma)\in {\mathcal P}^*(m,n)}q^{h(\alpha)-1+A'(\gamma)}.
\end{align}
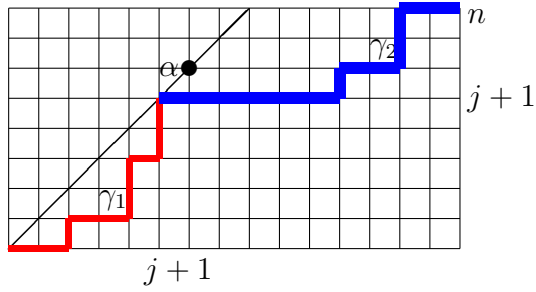
\begin{figure}[t]
\unitlength=10mm
\begin{picture}(6, 4)(0,0)
\multiput(.2,.2)(.4,0){16}
{\line(0,1){3.2}}
\multiput(.2,.2)(0, .4){9}
{\line(1,0){6}}
\put(.2,.2){\line(1,1){3.2}}
\put(.2,.2){\line(1,1){3.2}}
\put(6.3,3.2){$n$}
\put(2.2,2.5){$\alpha$}
\put(1.4, 0.8){$\gamma_{1}$}
\put(5, 2.8){$\gamma_{2}$}
\put(2.6,2.6){\circle*{.2}}
\put(6.3,2.1){$j+1$}
\put(2.0,-.2){$j+1$}
\linethickness{0.8mm}
\put(.2,.2){\red{\line(1,0){.8}}}
\put(1,.2){\red{\line(0,1){.4}}}
\put(1,.6){\red{\line(1,0){.8}}}
\put(1.8,.6){\red{\line(0,1){0.8}}}
\put(1.8,1.4){\red{\line(1,0){0.4}}}
\put(2.2,1.4){\red{\line(0,1){.8}}}
\linethickness{1.5mm}
\put(2.2,2.2){\blue{\line(1,0){2.4}}}
\put(4.6,2.2){\blue{\line(0,1){.4}}}
\put(4.6,2.6){\blue{\line(1,0){.8}}}
\put(5.4,2.6){\blue{\line(0,1){.8}}}
\put(5.4,3.4){\blue{\line(1,0){0.8}}}
\end{picture}
\caption{$(\alpha, \gamma)\to (\gamma_1, (\alpha', \gamma_2))$ }
\label{fig2}
\end{figure}
 Now, we compute the above enumerative polynomial of  ${\mathcal P}^*(m,n)$ in another way in order to 
obtain the right-hand side of \eqref{eq:key}.  We distinguish two cases.
\begin{itemize}
\item
Let ${\mathcal P}^*_1(m,n,j)$ 
be the set of all pointed lattice paths $(\alpha, \gamma)$ in ${\mathcal P}^*(m,n)$ such that 
  $h(\alpha)\in \{j+2,\ldots, n\}$,
  where $j$ is
   the smallest integer such that $(j+1,j)\rightarrow (j+1,j+1)$ is a step of $\gamma$, i.e., the 
    first step of $\gamma$ touching
   the line $y=x$. 
    If $(\alpha, \gamma)\in {\mathcal P}^*_1(m,n,j)$, then we have the correspondence 
$(\alpha, \gamma)\to (\gamma_1, (\alpha', \gamma_2))$,
where $\gamma_1$ is a lattice path from
$(0,0)$ to $(j+1, j+1)$ which touches the line $y=x$ only at the two extremities, and $(\alpha',\gamma_2)$ 
is a pointed lattice path from 
$(0,0)$ to 
$(m-j, n-j-1)$ with $h(\alpha')=h(\alpha)-j-1$.  This decomposition is depicted in Figure~\ref{fig2}.

Thus  the corresponding enumerative polynomial of such paths for the fixed $j$  is
 $$
\sum_{(\alpha,\gamma)\in {\mathcal P}^*_1(m,n,j)}q^{h(\alpha)-1+A(\gamma)}= q^{j}{\tilde f}(j,j|q)\cdot  q^{j+1} [n-j-1]_{q}{\tilde f}(m-j-1, n-j-1|q).
 $$
Summing over all $j$ ($0\leq j\leq n-2$) we obtain the second term on the right-hand side of \eqref{eq:key}.

\item Let ${\mathcal P}^*_2(m,n)$ 
be the set of all pointed lattice paths $(\alpha, \gamma)$ in ${\mathcal P}^*(m,n)$ such that 
 $h(\alpha)\in \{1, \ldots, n\}$ and  $h(\alpha)\leq j+1$ where   $j$ (if any) is
   the smallest integer such that $(j+1,j)\rightarrow (j+1,j+1)$ is a step of $\gamma$, i.e., the 
    first step of $\gamma$ touching
   the line $y=x$.
If  $(\alpha, \gamma)\in {\mathcal P}^*_2(m,n)$, 
 where
$\gamma=(p_{0},\ldots, p_{m+n+1})$ with $p_0=(0,0)$ and $p_{m+n+1}=(m+n+1, n)$,  we can associate 
a  pair  $(i, \gamma')$ where 
$\gamma'\in  {\mathcal P}(m-1,n-1)$ is obtained from 
 $\gamma$ by deleting the vertical step  $(x, h(\alpha)-1)\to (x, h(\alpha))$ and 
 the first horizontal step $(0,0)\to (1,0)$, i.e.,
 $$
 \gamma'=(p'_1, \ldots, p'_i, p'_{i+2}, \ldots, p'_{n+m+1})
 $$
  where
 $i=x+h(\alpha)-1$, $p'_k=p_k-(1,0)$ if $k=1, \ldots, i$ and $p'_k=p_k-(0,1)$ if $k=i+2, \ldots, m+n+1$.
 It is easy to see that the mapping $(\alpha, \gamma)\mapsto (i,\gamma')$ is a bijection, which 
 is depicted in Figure~\ref{fig3}.
\begin{figure}[t]
\unitlength=10mm
\begin{picture}(6, 4)(4,0)
\multiput(.2,.2)(.4,0){16}{\line(0,1){3.2}}
\multiput(.2,.2)(0, .4){9}{\line(1,0){6}}
\put(.2,.2){\line(1,1){3.2}}
\put(.2,.2){\line(1,1){3.2}}
\put(6.3,3.2){$n$}
\put(2.2,2.5){$\alpha$}
\put(5.6,-.2){$m$}
\linethickness{.5mm}
\put(.2,.2){\blue{\line(1,0){.4}}}
\put(.6,.2){\red{\line(1,0){.4}}}
\put(1,.2){\red{\line(0,1){.4}}}
\put(1,.6){\red{\line(1,0){.8}}}
\put(1.8,.6){\red{\line(0,1){.8}}}
\put(1.8,1.4){\red{\line(1,0){1.2}}}
\put(3.0,1.4){\red{\line(0,1){.4}}}
\put(3.0,1.8){\red{\line(1,0){1.6}}}
\put(4.6,1.8){\red{\line(0,1){.4}}}
\put(4.6,2.2){\blue{\line(0,1){.4}}}
\put(4.6,2.6){\red{\line(1,0){.8}}}
\put(5.4,2.6){\red{\line(0,1){.8}}}
\put(5.4,3.4){\red{\line(1,0){0.8}}}
\linethickness{.05mm}
\put(4.2,2.2){\blue{\line(1,1){.4}}}\put(4.2,2.6){\blue{\line(1,-1){.4}}}
\put(3.8,2.2){\blue{\line(1,1){.4}}}\put(3.8,2.6){\blue{\line(1,-1){.4}}}
\put(3.4,2.2){\blue{\line(1,1){.4}}}\put(3.4,2.6){\blue{\line(1,-1){.4}}}
\put(3,2.2){\blue{\line(1,1){.4}}}\put(3,2.6){\blue{\line(1,-1){.4}}}
\put(2.6,2.2){\blue{\line(1,1){.4}}}\put(2.6,2.6){\blue{\line(1,-1){.4}}}
\put(2.2,2.2){\blue{\line(1,-1){.4}}}\put(2.2,1.8){\blue{\line(1,1){.4}}}
\put(1.8,1.8){\blue{\line(1,-1){.4}}}\put(1.8,1.4){\blue{\line(1,1){.4}}}
\put(1.4,1.4){\blue{\line(1,-1){.4}}}\put(1.4,1.0){\blue{\line(1,1){.4}}}
\put(1,1){\blue{\line(1,-1){.4}}}\put(1,0.6){\blue{\line(1,1){.4}}}
\put(.6,.6){\blue{\line(1,-1){.4}}}\put(.6,.2){\blue{\line(1,1){.4}}}
\multiput(8.2,.2)(.4,0){16}{\line(0,1){3.2}}
\multiput(8.2,.2)(0, .4){9}{\line(1,0){6}}
\put(8.2,.2){\line(1,1){3.2}}
\put(6.6, 2){\vector(1,0){1}}
\put(14.3,3.2){$n$}
\put(12.2,2.2){\circle*{.2}}
\put(13.6,-.2){$m$}
\linethickness{.5mm}
\put(8.2,.2){\red{\line(1,0){.4}}}
\put(8.6,.2){\red{\line(0,1){.4}}}
\put(8.6,.6){\red{\line(1,0){.8}}}
\put(9.4,.6){\red{\line(0,1){.8}}}
\put(9.4,1.4){\red{\line(1,0){1.2}}}
\put(10.6,1.4){\red{\line(0,1){.4}}}
\put(10.6,1.8){\red{\line(1,0){1.6}}}
\put(12.2,1.8){\red{\line(0,1){.4}}}
\put(12.2,2.2){\red{\line(1,0){.8}}}
\put(13,2.2){\red{\line(0,1){.8}}}
\put(13,3){\red{\line(1,0){0.8}}}
\linethickness{1mm}
\put(4.2,2.2){\blue{\line(1,1){.4}}}\put(4.2,2.6){\blue{\line(1,-1){.4}}}
\put(3.8,2.2){\blue{\line(1,1){.4}}}\put(3.8,2.6){\blue{\line(1,-1){.4}}}
\put(3.4,2.2){\blue{\line(1,1){.4}}}\put(3.4,2.6){\blue{\line(1,-1){.4}}}
\put(3,2.2){\blue{\line(1,1){.4}}}\put(3,2.6){\blue{\line(1,-1){.4}}}
\put(2.6,2.2){\blue{\line(1,1){.4}}}\put(2.6,2.6){\blue{\line(1,-1){.4}}}
\put(2.2,2.2){\blue{\line(1,-1){.4}}}\put(2.2,1.8){\blue{\line(1,1){.4}}}
\put(1.8,1.8){\blue{\line(1,-1){.4}}}\put(1.8,1.4){\blue{\line(1,1){.4}}}
\put(1.4,1.4){\blue{\line(1,-1){.4}}}\put(1.4,1.0){\blue{\line(1,1){.4}}}
\put(1,1){\blue{\line(1,-1){.4}}}\put(1,0.6){\blue{\line(1,1){.4}}}
\put(.6,.6){\blue{\line(1,-1){.4}}}\put(.6,.2){\blue{\line(1,1){.4}}}
\end{picture}
\caption{$(\alpha, \gamma)\mapsto (i, \gamma')$ with $m=15$, $n=8$, $\alpha=(6,6)$ and $i=15$}
\label{fig3}
\end{figure}
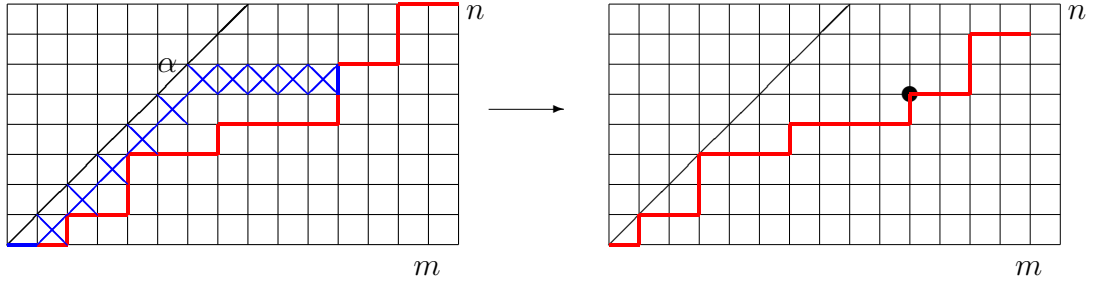
\end{itemize}
Since $1\leq x\leq m$ and $0\leq h(\alpha)-1\leq n-1$ we have  
 $i\in \{1, \ldots, m+n-1\}$. As $A'(\gamma)=x-1+A'(\gamma')$ we have 
 $$
 h(\alpha)-1+A'(\gamma)=i-1+A'(\gamma').
 $$ 
It follows that 
 $$
 \sum_{(\alpha, \gamma)\in {\mathcal P}^*_2(m,n)}q^{h(\alpha)+A'(\gamma)}
 =\sum_{i=1}^{m+n-1} 
 q^{i-1}\sum_{\gamma'} q^{A'(\gamma')}=[n+m-1]_{q}{\tilde f}(m-1, n-1|q).
 $$
Summing up the two cases we obtain the right-hand side of \eqref{eq:key}.
\end{proof}
When $q=1$ we have  an alternative proof of Theorem~\ref{thm2}.
\begin{proof}[Another proof of the $q=1$ case]
When $q=1$ Eq. \eqref{eq:rec} reduces to
\begin{align}\label{q=1}
nC_{n+1}(x|1)=(2n-1+x)C_n(x|1)+\sum^{n-2}_{j=0}(n-j-1)C_jC_{n-j}(x|1)\quad (n\geq 2).
\end{align}
This yields  immediately 
$C_1(x|1)=1$, $C_2(x|1)=x+1$, $C_3(x|1)=(x+1)(x+4)/2$, in accordance with
 the formula \eqref{eq:explicit}. For $n\geq 3$, letting $k=n-j-3$, $N=n-3$ and $z=x+3$,
 by \eqref{eq:explicit}, the 
 recurrence \eqref{q=1} is equivalent to the following identity
$$
\frac{(z+N+2)_{N}}{N!}=\sum_{k=0}^{N} 4^{N-k}\frac{(3/2)_{N-k}(z+k)_{k}}{(3)_{N-k}k!}\qquad (N\geq 0).
$$
Notice that we can rewrite the right-hand side as
\begin{align*}
\frac{(3/2)_{N}}{(3)_{N}}4^{N}&\sum_{k=0}^{N}\frac{(-2-N)_{k}((z+1)/2)_{k}(z/2)_{k}}{k!(-1/2-N)_{k}(z)_{k}}\\
=\frac{(3/2)_{N}}{(3)_{N}}4^{N}&\left({}_{3}F_{2}
\left(\begin{array}{ccc}
-2-N,&(z+1)/2,&z/2\\
-1/2-N,&z,&
\end{array};1\right)\right.\\
&\quad -\frac{(-2-N)_{N+1}((z+1)/2)_{N+1}(z/2)_{N+1}}{(-1/2-N)_{N+1}(z)_{N+1}(N+1)!}\\
&\quad\quad\left.-\frac{(-2-N)_{N+2}((z+1)/2)_{N+2}(z/2)_{N+2}}{(-1/2-N)_{N+2}(z)_{N+2}(N+2)!} \right).
\end{align*}
Invoking  Pfaff-Saalsch\"utz formula \cite[Theorem 2.2.6]{AAR99} we obtain 
$$
{}_{3}F_{2}
\left(\begin{array}{ccc}
-2-N,&(z+1)/2,&z/2\\
-1/2-N,&z,&
\end{array};1\right)=\frac{(z/2)_{N+2}((z-1)/2)_{N+2}}{(z)_{N+2}(-1/2)_{N+2}}.
$$
Substituting this in the previous expression  yields  $\frac{(z+N+2)_{N}}{N!}$ after simplification.
\end{proof}

When $x=1$ Eq.~\eqref{q=1} reduced to the following identity for Catalan numbers:
 \begin{align}
 nC_{n+1}=2nC_n+\sum_{j=0}^{n-2}(n-j-1)C_jC_{n-j}.
 \end{align}

\section{Concluding remarks}
\bigskip

We conclude this paper with a few open problems. By Theorem~\ref{thm1}
it is clear that $C_{n+1}(x|q)$ is a polynomial in $x$  of degree $n$ with leading coefficient
$q^{n^2}/[1]_q[2]_q\cdots [n]_q$. 

\begin{conj}
  One can write $C_{n}(x|q)$ as an irreducible fraction
  \begin{equation*}
    \frac{P_n(x|q)}{[1]_q\cdots [n-1]_q},
  \end{equation*}
  where $P_n$ has only positive coefficients.
\end{conj}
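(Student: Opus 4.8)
The plan is to prove both assertions — positivity of the numerator and irreducibility of the fraction — by combining the recurrence of Theorem~\ref{thm2} with the leading-coefficient computation recalled at the start of this section. Set
$$P_n(x|q) := [1]_q[2]_q\cdots[n-1]_q\, C_n(x|q) = [n-1]_q!\,C_n(x|q).$$
Theorem~\ref{thm1} expands $C_n(x|q)$ in the basis ${x\choose j}_q$ ($0\le j\le n-1$) with coefficients in $\Z[q]$, and since the denominator of ${x\choose j}_q$ is $[j]_q!$, the least common denominator of these terms is $[n-1]_q!$; hence $P_n(x|q)\in\Z[q][x]$ is genuinely a polynomial, and the denominator in the conjecture is exactly the right one. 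It remains to show (i) $P_n\in\N[q][x]$ and (ii) that no nontrivial polynomial factor of $[n-1]_q!$ divides $P_n$.

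First I would prove positivity by induction on $n$. Multiplying \eqref{eq:rec} by $[n-1]_q!$ and using $[n-1]_q!\,C_{n-j}(x|q) = \bigl([n-j]_q[n-j+1]_q\cdots[n-1]_q\bigr)P_{n-j}(x|q)$ for $0\le j\le n-2$, the factor $[n]_q!$ on the left becomes $P_{n+1}$ and one obtains a recurrence of the shape
$$P_{n+1} = \bigl([2n-1]_q + x q^{2n-1}\bigr)P_n + \sum_{j=0}^{n-2}[n-j-1]_q\,{\widetilde C}_j(q)\,q^{2j+1}\,\bigl([n-j]_q\cdots[n-1]_q\bigr)P_{n-j},$$
with base cases $P_1 = 1$ and $P_2 = 1+qx$. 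Every ingredient on the right is manifestly in $\N[q]$ or $\N[q][x]$: each $q$-integer $[m]_q$ and the monomial $xq^{2n-1}$ have nonnegative coefficients, and ${\widetilde C}_j(q) = {\tilde f}(j,j|q)\in\N[q]$ by the area interpretation established in the proof of Theorem~\ref{thm2}. Crucially the recurrence contains no subtraction, so if $P_2,\ldots,P_n\in\N[q][x]$ then $P_{n+1}\in\N[q][x]$, closing the induction.

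For irreducibility I would argue through the leading coefficient in $x$. As recalled above, $C_n(x|q)$ has degree $n-1$ with leading coefficient $q^{(n-1)^2}/[n-1]_q!$, so the leading coefficient of $P_n=[n-1]_q!\,C_n$ is exactly the monomial $q^{(n-1)^2}$. Any common factor of $P_n(x|q)$ and the denominator $[n-1]_q!$ would be a polynomial in $q$ dividing every $x$-coefficient of $P_n$, in particular dividing $q^{(n-1)^2}$; but $[n-1]_q!$ is a product of $q$-integers, each equal to $1$ at $q=0$, so $q\nmid[n-1]_q!$ and $\gcd\bigl(q^{(n-1)^2},[n-1]_q!\bigr)=1$. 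Hence the only common factor is a unit, and the fraction is already in lowest terms.

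The main obstacle is not positivity or irreducibility individually — both fall out cleanly once the recurrence is in the right form — but rather the bookkeeping needed to rewrite \eqref{eq:rec} purely in terms of the $P_m$, in particular tracking the partial products $[n-j]_q\cdots[n-1]_q$ coming from $[n-1]_q!/[n-j-1]_q!$ and confirming that the reversed $q$-Catalan factors ${\widetilde C}_j(q)$ really lie in $\N[q]$. One should also fix the intended meaning of \emph{irreducible fraction}: the argument above establishes coprimality of $P_n$ with the $q$-only denominator $[n-1]_q!$ in $\Q[q][x]$, which is the natural reading; a stronger statement, such as irreducibility of $P_n$ as a bivariate polynomial, would demand a completely different analysis and is presumably not what is intended.
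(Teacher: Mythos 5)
The paper does not actually prove this statement: it appears in the concluding section as an open conjecture, verified by the authors only by computer up to $n=27$ and, at $q=1$, via Proposition~\ref{prop2}. So there is no proof of the authors' to compare yours against, and the question is whether your argument stands on its own. As far as I can check, it does, and the decisive observation --- which the authors do not appear to have exploited --- is that the recurrence of Theorem~\ref{thm2} is subtraction-free. Your bookkeeping is right: since $[n-1]_q!\,[n]_q C_{n+1}=P_{n+1}$ and $[n-1]_q!\,C_{n-j}=\bigl([n-j]_q\cdots[n-1]_q\bigr)P_{n-j}$, multiplying \eqref{eq:rec} by $[n-1]_q!$ yields exactly the recurrence you display, every factor of which lies in $\N[q][x]$: the $q$-integers, the monomial $xq^{2n-1}$, and ${\widetilde C}_j(q)={\tilde f}(j,j|q)$, which is a generating polynomial by area (equivalently, the reversal of the degree-$\binom{j}{2}$ polynomial $C_j(q)$). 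With $P_1=1$ and $P_2=1+qx$ the induction closes, and I have checked that this reproduces $P_3=(1+q)^2+(q+2q^2+2q^3)x+q^4x^2$ and the correct $P_4$. The irreducibility step is also sound: a non-unit factor of $[n-1]_q!$ dividing $P_n$ in $\Q[q][x]$ would divide every $x$-coefficient, hence the leading one, which Theorem~\ref{thm1} identifies as the monomial $q^{(n-1)^2}$, while no $[m]_q$ is divisible by $q$.

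Two points to tighten before claiming the result. First, do not justify $P_n\in\Z[q][x]$ by saying the least common denominator in Theorem~\ref{thm1} ``is exactly $[n-1]_q!$'' --- that exactness is precisely the irreducibility you prove afterwards, so as written the opening is mildly circular; the integrality of $P_n$ follows instead directly from your recurrence (or from $[n-1]_q!/[j]_q!\in\Z[q]$ term by term), and exactness of the denominator should be quoted only as a consequence of the gcd argument. Second, record explicitly that \eqref{eq:rec}, which the paper proves for $x=[k]_q$, holds as an identity in $\Q(q)[x]$ because both sides have degree $n$ in $x$ and agree at the infinitely many distinct points $[k]_q$. With those repairs, you have derived the conjecture as a corollary of Theorem~\ref{thm2}; given that the authors left it open, this deserves a careful independent verification (say, against their data for larger $n$) and, if it survives, a proper write-up.
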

This has been checked up to $n=27$. The similar conjecture is true
when $q=1$ by Proposition \ref{prop2}.

%

Finally, the Newton polytope of the numerator of $C_{n}(x|q)$ seems to
have a nice shape. This is illustrated in Figure~\ref{newton}, where
the horizontal axis is associated with powers of $q$ and the vertical
axis with powers of $x$. The slopes of the upper part seems to be
given in general by the odd integers $1$, $3$, \dots, $2n-3$.

\begin{figure}\centering
  \includegraphics[height=3cm]{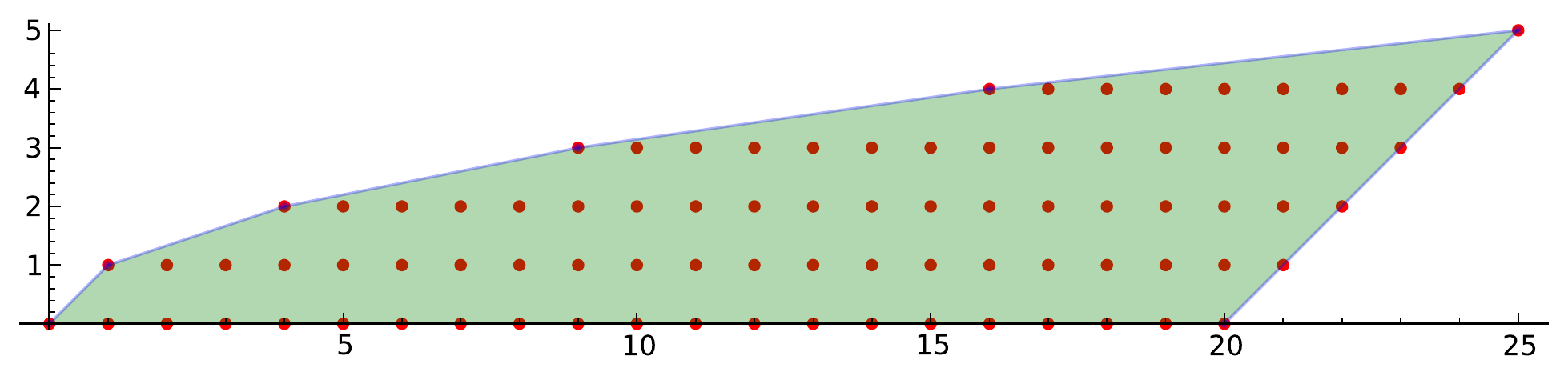}
  \caption{Newton polytope of the numerator of $C_{n}(x|q)$ for $n=6$}
  \label{newton}
\end{figure}
\bigskip

%


\begin{thebibliography}{CEFP12}
\bibitem[And75]{And75}
G. Andrews,  Identities in combinatorics. II. A $q$-analog of the Lagrange inversion theorem. Proc. Amer. Math. Soc. 53 (1975), no. 1, 240--245.
\bibitem[AAR99]{AAR99}
 G. Andrews, R.  Askey, R. Roy, 
  Special functions. Encyclopedia of Mathematics and its Applications, 71. Cambridge University Press, Cambridge, 1999.
\bibitem[BP12]{BP12} S. A. Blanco and  T. Kyle Petersen, Counting Dyck paths by area and rank, arXiv:1206.0803v1.
\bibitem[BF07]{BF07} L. M. Butler and W. P. Flanigan, 
 A note on log-convexity of $q$-Catalan numbers,
Ann. Comb. 11 (2007), no. 3-4, 369--373.
\bibitem[CR64]{CR64}  L. Carlitz and J. Riordan,
Two element lattice permutation numbers and their
$q$-generalization,
Duke Math. J., 31:371--388, 1964.
\bibitem[Car72]{Car72}
L. Carlitz, 
Sequences, paths, ballot numbers,
Fibonacci Quart. 10 (1972), no. 5, 531--549. 
\bibitem[Cig97]{Cig97} J.  Cigler,  Operatormethoden f\"ur $q$-Identit\"aten. IV. Eine Klasse von $q$-Gould-Polynomen. 
\"Osterreich. Akad. Wiss. Math.-Natur. Kl. Sitzungsber. II 205 (1996), 169--174 (1997).
\bibitem[Cig98]{Cig98} J. Cigler, 
Operatormethoden f\"ur $q$-IdentitŠten. VI. Geordnete Wurzelb\"aume und $q$-Catalan-Zahlen,
\"Osterreich. Akad. Wiss. Math.-Natur. Kl. Sitzungsber. II 206 (1997), 253--266 (1998).
\bibitem[Cig05]{Cig05} J. Cigler,  $q$-Catalan numbers and $q$-Narayana polynomials, arXiv:math/0507225.
\bibitem[Cha13a]{Cha13a} F. Chapoton, 
Sur une s{\'e}rie en arbres \`a deux param{\`e}tres, 
S\'eminaire Lotharingien de Combinatoire vol. 70 (2013)
\bibitem[Cha13b]{Cha13b} F. Chapoton, 
Flows on rooted trees and the Menous-Novelli-Thibon idempotents, 
to appear in Mathematica Scandinavica (2013).
\bibitem[Com74]{Com74} L. Comtet, 
Advanced Combinatorics, D. Reidel, Dordrecht, Holland, 1974.
\bibitem[FH85]{FH85} J. F\"urlinger, J. Hofbauer,
$q$-Catalan numbers
J. Combin. Theory Ser. A, 40 (1985), pp. 248--264
\bibitem[GITZ]{GITZ} V. J..W Guo, M. Ishikawa, H. Tagawa, J. Zeng,
A quadratic formula  for basic hypergeometric series related to Askey-Wilson polynomials, to appear in Proc. Amer. Math. Soc.
\bibitem[Hag08]{Hag08} 
J. Haglund,  The $q$,$t$-Catalan numbers and the space of diagonal harmonics. With an appendix on the combinatorics of Macdonald polynomials. University Lecture Series, 41. American Mathematical Society, Providence, RI, 2008. 
\bibitem[Rein05]{Rein05} M. Reineke, 
Cohomology of noncommutative Hilbert schemes,
Algebr. Represent. Theory 8 (2005), no. 4, 541--561. 
\bibitem[St86]{St86} R. Stanley,  Enumerative combinatorics. Vol. I. With a foreword by Gian-Carlo Rota. The Wadsworth \& Brooks/Cole Mathematics Series. Wadsworth \& Brooks/Cole Advanced Books \& Software, Monterey, CA, 1986.
\bibitem[Ze06]{Ze06} 
J. Zeng, The Akiyama--Tanigawa algorithm for Carlitz's $q$-Bernoulli numbers, Integers 6 (2006) A5.
\end{thebibliography}
\end{document}